\def\th@plain{%
  \upshape 
}
\renewenvironment{proof}[1][\proofname]{\par
  \pushQED{\qed}%
  \normalfont \topsep6\p@\@plus6\p@\relax
  \trivlist
  \item[\hskip\labelsep
        \bfseries
    #1\@addpunct{.}]\ignorespaces
}{%
  \popQED\endtrivlist\@endpefalse
}
\newtheorem{theorem}{Theorem}[section]
\newtheorem{lemma}{Lemma}
\newtheorem{corollary}{Corollary}
\newtheorem{conjecture}{Conjecture}
\newtheorem*{conjecture*}{Conjecture}
\newtheorem{case}{Case}
\newtheorem{subcase}{Subcase}[case]
\theoremstyle{definition}
\newtheorem{remark}{Remark}
\newcounter{Hcase}
\newcounter{Hclaim}
\newcommand{\resetcounter}{\stepcounter{Hcase}\setcounter{case}{0}\stepcounter{Hclaim}\setcounter{claim}{0}}
\newcommand{\etal}{et~al.\ }
\def\int(#1){\mathrm{int}(#1)}
\def\ext(#1){\mathrm{ext}(#1)}
\def\Int(#1){\mathrm{Int}(#1)}
\def\Ext(#1){\mathrm{Ext}(#1)}
\def\ad(#1){\mathrm{ad}(#1)}
\def\mad(#1){\mathrm{mad}(#1)}
\def\la(#1){\mathrm{la}(#1)}
\newcommand{\oddgirth}{\mathrm{oddgirth}}
\newcommand{\evengirth}{\mathrm{evengirth}}
\begin{document}
\title{Odd graph and its applications to the strong edge coloring}
\author{Tao Wang\footnote{{\tt Corresponding
author: wangtao@henu.edu.cn} }{ } \quad Xiaodan Zhao\\
{\small Institute of Applied Mathematics}\\
{\small Henan University, Kaifeng, 475004, P. R. China}}
\maketitle
\begin{abstract}
A strong edge coloring of a graph is a proper edge coloring in which every color class is an induced matching. The strong chromatic index $\chiup_{s}'(G)$ of a graph $G$ is the minimum number of colors in a strong edge coloring of $G$. Let $\Delta \geq 4$ be an integer. In this note, we study the odd graphs and show the existence of some special walks. By using these results and Chang's ideas in [Discuss. Math. Graph Theory 34 (4) (2014) 723--733], we show that every planar graph with maximum degree at most $\Delta$ and girth at least $10 \Delta - 4$ has a strong edge coloring with $2\Delta - 1$ colors. In addition, we prove that if $G$ is a graph with girth at least $2\Delta - 1$ and $\mad(G) < 2 + \frac{1}{3\Delta - 2}$, where $\Delta$ is the maximum degree and $\Delta \geq 4$, then $\chiup_{s}'(G) \leq 2\Delta - 1$; if $G$ is a subcubic graph with girth at least $8$ and $\mad(G) < 2 + \frac{2}{23}$, then $\chiup_{s}'(G) \leq 5$. 
\end{abstract}

\section{Introduction}
All graphs considered in this paper are simple, finite and undirected. A {\em strong edge coloring} of a graph is a proper edge coloring in which every color class is an induced matching. The {\em strong chromatic index} $\chiup_{s}'(G)$ of a graph $G$ is the minimum number of colors in a strong edge coloring of $G$. Fouquet and Jolivet \cite{MR737086, MR776805} introduced the notion of strong edge coloring for the radio networks and their frequencies assignment problem. 

The greedy algorithm provides an upper bound $2\Delta(\Delta - 1) + 1$ on the strong chromatic index, where $\Delta$ is the maximum degree of the graph. In 1985, Erd{\H{o}}s and Ne{\v s}et{\v r}il constructed graphs with strong chromatic index $\frac{5}{4}\Delta^{2}$ when $\Delta$ is even, $\frac{1}{4}(5\Delta^{2} - 2 \Delta + 1)$ when $\Delta$ is odd. Inspired by their construction, they proposed the following strong edge coloring conjecture during a seminar in Prague.
\begin{conjecture}[Erd{\H{o}}s and Ne{\v s}et{\v r}il, see \cite{MR975526}]\label{C}%
If $G$ is a graph with maximum degree $\Delta$, then
\[
\chiup_{s}'(G) \leq
\begin{cases}
\frac{5}{4} \Delta^{2}, & \text{if $\Delta$ is even}, \\
\frac{1}{4}(5\Delta^{2} - 2 \Delta + 1), & \text{if $\Delta$ is odd.}
\end{cases}
\]
\end{conjecture}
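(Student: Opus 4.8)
The statement is the Erd\H{o}s--Ne\v{s}et\v{r}il conjecture, which remains open for every $\Delta \geq 4$; I therefore describe the strategy that has driven all partial progress rather than a complete argument. The natural first move is to reformulate: a strong edge colouring of $G$ is exactly a proper vertex colouring of $L(G)^{2}$, the square of the line graph, where two edges are adjacent when some endpoint of one is adjacent or equal to some endpoint of the other. Since every edge $uv$ has at most $2(\Delta-1)$ edges sharing an endpoint and at most $2(\Delta-1)^{2}$ further edges at distance two, the maximum degree of $L(G)^{2}$ is at most $2\Delta^{2}-2\Delta$, which reproduces the greedy bound $\chiup_{s}'(G)\le 2\Delta^{2}-2\Delta+1$. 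The target $\tfrac54\Delta^{2}$ is forced to be best possible by the balanced blow-up of the $5$-cycle: replacing each vertex of $C_{5}$ by an independent set of $\tfrac12\Delta$ vertices (with the obvious adjustment for odd $\Delta$) yields a graph in which every two edges lie within distance two, so its strong chromatic index equals its number of edges, namely $5(\tfrac12\Delta)^{2}=\tfrac54\Delta^{2}$. Any proof must therefore extract exactly this constant.

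The main engine would be the probabilistic method in the style of Molloy and Reed. First I would colour the edges of $G$ with $\lfloor(2-\varepsilon)\Delta^{2}\rfloor$ colours uniformly at random, then correct the conflicts. The gain over the greedy bound comes from \emph{local sparsity}: inside the $L(G)^{2}$-neighbourhood of a fixed edge the sets of edges meeting a common vertex form cliques, so many pairs of neighbours are already adjacent and compete for the same colours, which means a typical edge retains many admissible colours after the random assignment. Quantifying this retention by an entropy-compression or Lov\'asz-Local-Lemma argument and then recolouring the uncoloured remainder greedily on a sparse residual graph yields a bound of the shape $(2-\varepsilon)\Delta^{2}$; successive refinements of exactly this scheme are what have lowered the constant over the literature.

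The decisive obstacle is sharpening the constant all the way from the values accessible to the probabilistic method down to $\tfrac54$. The local-lemma estimates only register that the neighbourhoods are somewhat sparse; they do not detect that the \emph{only} configuration forcing density near $\tfrac54\Delta^{2}$ is the $C_{5}$-blow-up above. To close the gap I would aim for a stability dichotomy: either the local structure around a densest edge is within a bounded edit distance of a $C_{5}$-blow-up, in which case one colours that region directly and matches $\tfrac54\Delta^{2}$ exactly, or the region is quantifiably farther from it, in which case a strengthened local-lemma weight function gives strictly fewer than $\tfrac54\Delta^{2}$ colours. Making such a dichotomy rigorous and uniform in $\Delta$ is precisely the step that nobody has managed, which is why the conjecture stands; the small-degree case $\Delta=3$, where the matching bound $10$ is known, is consistent with this picture but rests on a finite case analysis that does not scale.
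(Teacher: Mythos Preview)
Your assessment is correct: the statement is the Erd\H{o}s--Ne\v{s}et\v{r}il conjecture, and the paper does not prove it either---it is merely quoted as motivation, with no argument supplied. Your survey of the probabilistic strategy and the $C_{5}$-blow-up extremal construction is accurate background, but since the paper offers no proof there is nothing to compare; you have correctly identified that no complete proof exists for $\Delta\ge 4$.
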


For $\Delta = 3$, the conjecture was confirmed independently by Hor\'{a}k \etal \cite{MR1217390} and Andersen \cite{MR1189847}. For $\Delta = 4$, Cranston \cite{MR2264374} showed that $22$ suffices, and this was further improved to $21$ by Huang, Santana and Yu \cite{Huang}. In 1997, Molloy and Reed \cite{MR1438613} gave the first general bound by using probabilistic techniques, showing that if $\Delta$ is large enough, then every graph with maximum degree $\Delta$ has a strong edge coloring with $1.998\Delta^{2}$ colors. Recently, the upper bound was improved to $1.93\Delta^{2}$ by Bruhn and Joos \cite{2015arXiv150402583B}. Very recently, this was further improved to $1.835\Delta^{2}$ by Bonamy, Perrett and Postle \cite{Bonamy}.

For planar graphs, Faudree, Schelp, Gy{\'a}rf{\'a}s and Tuza \cite{MR1412876} established the following upper bound using the Four Color Theorem and Vizing's Theorem. 
\begin{theorem}[Faudree \etal \cite{MR1412876}]
Every planar graph with maximum degree $\Delta$ has a strong edge coloring with $4\Delta + 4$ colors. 
\end{theorem}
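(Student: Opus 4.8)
The plan is to colour in two stages, using Vizing's theorem once and the Four Color Theorem a second time. First I would apply Vizing's theorem to the (simple) planar graph $G$ to obtain a proper edge colouring $\phi\colon E(G)\to\{1,\dots,\Delta+1\}$, and write $M_{1},\dots,M_{\Delta+1}$ for its colour classes, so that each $M_{i}$ is a matching. Since any strong edge colouring is in particular a proper edge colouring (each colour class being an induced matching, hence a matching), it suffices to refine each $M_{i}$ into a bounded number of parts, each of which is an induced matching of $G$, and then to assign pairwise disjoint palettes to the distinct classes $M_{i}$. The resulting colouring of $G$ uses $\sum_{i=1}^{\Delta+1}(\text{number of parts of }M_{i})$ colours, and every colour class is an induced matching, hence it is a strong edge colouring.

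For a fixed $i$ I would introduce an auxiliary graph $H_{i}$ with vertex set $M_{i}$, where two members $e,f\in M_{i}$ are declared adjacent exactly when some edge of $G$ joins an endpoint of $e$ to an endpoint of $f$. Because $M_{i}$ is a matching, distinct $e,f\in M_{i}$ are automatically vertex-disjoint, so a set of edges of $M_{i}$ is an induced matching of $G$ precisely when it is an independent set of $H_{i}$. Consequently $M_{i}$ can be split into $\chi(H_{i})$ induced matchings, and the strong edge colouring above uses $\sum_{i=1}^{\Delta+1}\chi(H_{i})$ colours in total.

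The crucial point — the only place planarity is invoked again — is that each $H_{i}$ is planar. Contract every edge of the matching $M_{i}$ in $G$; the result $G/M_{i}$ is a minor of $G$, hence planar. Its vertex set consists of one vertex $\bar e$ for each $e\in M_{i}$, together with the vertices of $G$ not covered by $M_{i}$, and by construction $\bar e$ and $\bar f$ are adjacent in $G/M_{i}$ if and only if $e$ and $f$ are joined by an edge of $G$; that is, $H_{i}$ is exactly the subgraph of $G/M_{i}$ induced on $\{\bar e : e\in M_{i}\}$. An induced subgraph of a planar graph is planar, so the Four Color Theorem gives $\chi(H_{i})\le 4$. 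Summing over the $\Delta+1$ classes yields $\chiup_{s}'(G)\le 4(\Delta+1)=4\Delta+4$.

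I expect the only real subtlety to be the verification in the last paragraph that $H_{i}$ coincides with the stated induced subgraph of $G/M_{i}$: one must check that the contraction of a matching creates no spurious adjacencies, and observe that any multiple edges it may create are irrelevant, both because $H_{i}$ is defined as a simple graph and because the Four Color Theorem is unaffected by parallel edges. The remaining points are routine bookkeeping — that the disjoint palettes prevent any conflict between edges of different $M_{i}$, and that each final colour class, being a subset of some $M_{i}$ and an independent set of $H_{i}$, is simultaneously a matching and an induced subgraph, i.e.\ an induced matching of $G$.
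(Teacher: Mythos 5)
Your argument is correct and is exactly the approach the paper attributes to Faudree et al.: take a Vizing edge colouring into $\Delta+1$ matchings, contract each matching to get a planar minor, and use the Four Color Theorem to split each matching into at most $4$ induced matchings, giving $4(\Delta+1)=4\Delta+4$ colours. The paper itself only cites this result without proof, and your write-up supplies the standard argument correctly, including the key check that independence in the auxiliary (contracted) graph is equivalent to being an induced matching.
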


Kostochka \cite{MR3398865} showed that every subcubic planar multigraph without loops (multiple edges are allowed) has a strong edge coloring with nine colors. Borodin and Ivanova \cite{MR3117054} showed that if $G$ is a planar graph with girth at least $40 \lfloor \frac{\Delta}{2} \rfloor+ 1$, then $\chiup_{s}'(G) \leq 2\Delta -1$. Chang, Montassier, P{\^e}cher and Raspaud \cite{MR3268687} improved the lower bound on the girth as the following.

\begin{theorem}[Chang \etal \cite{MR3268687}]
Let $\Delta \geq 4$ be an integer. If $G$ is a planar graph with maximum degree at most $\Delta$ and girth at least $10\Delta + 46$, then $\chiup_{s}'(G) \leq 2\Delta - 1$. 
\end{theorem}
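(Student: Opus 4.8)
The plan is the standard reducibility--plus--discharging scheme, with the properties of odd graphs established earlier doing the real work in the reducibility step. Suppose the theorem fails and let $G$ be a counterexample with $|E(G)|$ minimum; then $G$ is connected and every proper subgraph of $G$ admits a strong edge coloring from the palette $\{1,\dots,2\Delta-1\}$, so it suffices to reach a contradiction with Euler's formula. Throughout I would identify a strong edge coloring of $G$ with a proper coloring of the square of the line graph: the colour of an edge $e$ must avoid the colours of every edge sharing a vertex with $e$ and of every edge one further step away, and the whole difficulty is that such a conflict set can have size of order $\Delta^{2}$, far more than $2\Delta-1$, so colour extensions succeed only where $G$ is locally sparse.

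First I would assemble the forbidden configurations. The building blocks are \emph{threads} (maximal paths whose interior vertices all have degree $2$) and \emph{pendant paths} (threads ending in a degree-$1$ vertex). The easy reductions: $G$ has no pendant path of length at least $2$ (delete the leaf, recolour the smaller graph by minimality, and colour the freed edge, which meets at most $\Delta$ coloured edges), and $G$ has no cycle all of whose vertices have degree $2$ (such a cycle has strong chromatic index at most $5\le 2\Delta-1$). The substantive reductions concern threads that are too long: if a thread has length at least a threshold $t_{0}=t_{0}(\Delta)$, one deletes a block of its interior degree-$2$ vertices, recolours the smaller graph, reinserts the block, and colours the new edges from the two ends inwards. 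Away from the ends each new edge conflicts with only four coloured edges, so there is ample slack; the bottleneck is the first edge on each side, which can conflict with close to $2\Delta$ coloured edges and so may have a unique admissible colour. Showing that this always works --- possibly after recolouring a bounded number of nearby thread edges --- is a finite colour-extension problem whose solvability is exactly what the structural results on odd graphs provide, and this is where I would invoke them. Analogous arguments rule out short threads joining two nearby branch vertices and configurations in which several short threads meet at one high-degree vertex.

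Next comes the discharging. Fix a plane embedding, set $\mathrm{ch}(v)=2d(v)-6$ and $\mathrm{ch}(f)=\ell(f)-6$, so that $\sum_{v}\mathrm{ch}(v)+\sum_{f}\mathrm{ch}(f)=-12$. Only degree-$2$ vertices start negative (at $-2$), and since the girth is at least $10\Delta-4$ every face starts with charge at least $10\Delta-10>0$. The single rule sends a fixed amount of charge from each incident face to each degree-$2$ vertex. A face of length $\ell$ carries at most $\ell$ degree-$2$ vertices, but after the reductions above it carries at least $\lceil \ell/t_{0}\rceil$ branch vertices, hence pays out at most $\ell-\lceil \ell/t_{0}\rceil$ and stays nonnegative once $\ell$ is large enough relative to $t_{0}$; one then checks that the threshold $t_{0}$ delivered by the reducibility step and the face length $10\Delta-4$ dovetail so that every vertex and every face ends with nonnegative charge, contradicting $\sum\mathrm{ch}=-12$.

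The main obstacle is twofold. The genuinely hard part is the sharp reducibility of long threads and of threads between nearby branch vertices: one must prove that, however the smaller graph was coloured, the deleted portion can be recoloured, and pushing the threshold $t_{0}$ down to the value that yields girth $10\Delta-4$ rather than the larger bound of Chang, Montassier, P{\^e}cher and Raspaud is precisely what needs the fine structure of odd graphs. The second, more bookkeeping-heavy, difficulty is to calibrate the discharging --- the exact per-vertex transfer, faces not bounded by a cycle, and branch vertices of degree $3$ which carry no surplus --- so that the arithmetic closes at exactly the claimed girth.
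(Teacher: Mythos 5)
Your overall strategy (long-configuration reducibility via the odd-graph walk results, plus a sparsity argument from the girth) is in the right spirit, but it diverges from the paper in a way that exposes two genuine gaps. For the record, the paper never does any discharging for this bound: it proves the stronger girth bound $10\Delta-4$ (\autoref{MR}) by taking a minimal counterexample $G$, deleting the set $A$ of degree-one vertices, showing $H=G-A$ has minimum degree two, invoking the cited theorem of Ne{\v{s}}et{\v{r}}il, Raspaud and Sopena (\autoref{thread}: girth at least $5\ell+1$ forces a $1$-vertex or an $\ell$-thread) with $\ell=2\Delta-1$, and then contradicting the caterpillar reducibility of \autoref{RC}, which is where the special walks in $O_\Delta$ with prescribed length and prescribed first and last labels are used.

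The first gap is your treatment of degree-one vertices. A pendant edge at a vertex of large degree is \emph{not} reducible (its conflict set has size of order $\Delta^{2}$), and your reductions only remove pendant paths of length at least two, so leaves survive in the minimal counterexample. Then your assertion that only degree-two vertices start with negative charge is false (a leaf has charge $2\cdot1-6=-4$), and, worse, leaves attached along a path destroy threads of $G$: the configuration you actually must reduce is a caterpillar (a thread of $G-A$ together with all its pendant edges), and the extension must colour those pendant edges simultaneously with the path edges. This is exactly what \autoref{RC} accomplishes through \autoref{2N+}/\autoref{2N3+}; your alternative mechanism (``recolouring a bounded number of nearby thread edges'') is neither needed nor justified, and without the $H=G-A$ device your structural dichotomy does not describe the minimal counterexample. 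The second gap is quantitative. With charges $2d(v)-6$ and $\ell(f)-6$ and the single rule ``each face gives $1$ to each incident degree-two vertex,'' a face of length $L$ can carry up to roughly $\tfrac{t_{0}-1}{t_{0}}L$ degree-two vertices when threads of length $t_{0}$ are excluded, so it retains only about $L/t_{0}-6$, which is nonnegative only when $L\gtrsim 6t_{0}\approx 12\Delta$. That exceeds $10\Delta+46$ for all $\Delta\geq 27$ and never reaches $10\Delta-4$, so the claimed ``dovetailing'' does not close with your rule; obtaining $5(2\Delta-1)+1=10\Delta-4$ requires the sharper argument behind \autoref{thread}, which the paper simply cites rather than reproves. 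Either import that theorem (applied to $H$, as the paper does) or supply a genuinely finer discharging; as written, the arithmetic does not prove the stated theorem for large $\Delta$.
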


In this note, we improve the lower bound on the girth to $10\Delta - 4$ when $\Delta \geq 4$, by following and improving the method in \cite{MR3268687}. In addition, we give two results with the condition on the maximum average degree. The maximum average degree $\mad(G)$ of a graph $G$ is defined as 
\[
\mad(G) = \max_{\emptyset \neq H \subseteq G}\left\{\frac{2|E(H)|}{|V(H)|}\right\}. 
\]

\section{Odd graphs}
The odd graph $O_{n}$ has one vertex for each of the $(n-1)$-element subsets of a $(2n-1)$-element set $\{1, 2, \dots, 2n-1\}$. Two vertices are adjacent if and only if the corresponding subsets are disjoint. $O_{n}$ is a Kneser graph $KG(2n-1,n-1)$. We use $v$ to denote a vertex in the odd graph, and use $[v]$ to denote the corresponding subset. By the definition, every edge $uv$ can be labeled with a unique element which is not contained in $[u]$ or $[v]$. By the particular structure of odd graphs, the labels on the edges induce a strong edge coloring of the odd graph. We will use this property in the proof of \autoref{MR}. A {\em walk} is a sequence of edges such that every two consecutive edges are adjacent. A {\em special walk} is a walk whose every two consecutive edges are distinct.  A {\em $k$-special walk} is a special walk of length $k$. A {\em $k$-path} is a path with $k$ edges. 

\subsection{Structural results}

\begin{lemma}
Let $O_{n}$ be the odd graph. Then the following items hold: 
\begin{enumerate}[label = (\alph*)]
\item \label{2-Path}
If the distance between $u$ and $v$ is two, then $[u]$ and $[v]$ have exactly one different element. Thus, for two vertices $u$ and $v$, if $[u]$ and $[v]$ have exactly $k$ different elements, then they can be reached from each other in $2k$ steps and this path is a shortest even path. 
\item \label{Replacing}
If $w_{1}w_{2}w_{3}$ is a path with $w_{1}w_{2}$ and $w_{2}w_{3}$ being labeled with $x$ and $y$ respectively, then $[w_{3}]$ is obtained from $[w_{1}]$ by replacing $y$ with $x$. 
\item Every $3$-path $v_{1}v_{2}v_{3}v_{4}$ is contained in a $6$-cycle. 
\end{enumerate}
\end{lemma}
\begin{proof}%
The first two items are trivial, so we do not include their proofs. Note that the sets corresponding to $v_{1}$ and $v_{3}$ differ only on one element. So we may assume that  $[v_{1}] = X \cup \{x_{1}\}$ and $[v_{3}] = X \cup \{x_{2}\}$, where $X$ is an $(n-2)$-subset of $\{1, 2, \dots, 2n-1\}$. Thus, we can obtain $[v_{2}]=\{1, 2, \dots, 2n-1\}\setminus (X \cup \{x_{1}, x_{2}\})$. Without loss of generality, we may assume that $[v_{4}]=\{1, 2, \dots, 2n-1\}\setminus (X \cup \{x_{2}, x_{3}\})$ and $x_{1} \neq x_{3}$. It is easy to check that the path $v_{1}v_{2}v_{3}v_{4}$ is contained in a $6$-cycle $v_{1}v_{2}v_{3}v_{4}v_{5}v_{6}$, where $[v_{5}] = X \cup \{x_{3}\}$ and $[v_{6}] = \{1, 2, \dots, 2n-1\}\setminus (X \cup \{x_{1}, x_{3}\})$. 
\end{proof}
In the following, we use $\undergroup{x, y}$ to denote the labels $x$ and $y$ on two consecutive edges on a walk.  

\begin{theorem}\label{2N}
Given two vertices $w_{0}, w_{2n}$ (not necessarily distinct) in $O_{n}$ ($n \geq 4$) with $\lambda_{1} \notin [w_{0}]$ and $\lambda_{2} \notin [w_{2n}]$, there exists a $2n$-special walk from $w_{0}$ to $w_{2n}$ such that its first edge is labeled with $\lambda_{1}$ and the last edge is labeled with $\lambda_{2}$.
\end{theorem}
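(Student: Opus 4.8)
The plan is to recast the statement in purely set-theoretic terms and then build the walk by hand. Write $U=\{1,\dots,2n-1\}$, and for a $2n$-special walk $w_0w_1\cdots w_{2n}$ with label sequence $\ell_1,\dots,\ell_{2n}$ put $x_i=\ell_{2i+1}$, $y_i=\ell_{2i+2}$ and $S_i=[w_{2i}]$ for $0\le i\le n-1$. Applying \autoref{Replacing} to each $2$-path $w_{2i}w_{2i+1}w_{2i+2}$ gives $S_{i+1}=(S_i\setminus\{y_i\})\cup\{x_i\}$ with $y_i\in S_i$ and $x_i\notin S_i$ (so $x_i\ne y_i$ automatically), while the odd vertex is recovered as $[w_{2i+1}]=U\setminus(S_i\cup\{x_i\})$; conversely, any choice of pairs $(x_i,y_i)$ of this form with $S_0=[w_0]$ produces a genuine walk in $O_n$ whose labels in order are exactly $x_0,y_0,x_1,y_1,\dots,x_{n-1},y_{n-1}$. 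Since (again by \autoref{Replacing}) a walk in $O_n$ is special precisely when consecutive labels differ, the theorem is equivalent to the following: \emph{given $(n-1)$-sets $[w_0],[w_{2n}]$ and elements $\lambda_1\notin[w_0]$, $\lambda_2\notin[w_{2n}]$, find pairs $(x_0,y_0),\dots,(x_{n-1},y_{n-1})$ with $S_0=[w_0]$, $S_{i+1}=(S_i\setminus\{y_i\})\cup\{x_i\}$, $y_i\in S_i$, $x_i\notin S_i$, $S_n=[w_{2n}]$, $x_0=\lambda_1$, $y_{n-1}=\lambda_2$, and the single non-backtracking condition $y_i\ne x_{i+1}$ for $0\le i\le n-2$.}

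Next I would construct such a sequence explicitly. Put $D=[w_0]\setminus[w_{2n}]$, $E=[w_{2n}]\setminus[w_0]$ and $C=U\setminus([w_0]\cup[w_{2n}])$, so $|D|=|E|=k$ with $0\le k\le n-1$, $|C|=n-k\ge 1$, $\lambda_1\in E\cup C$ and $\lambda_2\in D\cup C$. The backbone of the chain is the set of $k$ \emph{productive} replacements, each deleting an element of $D$ and inserting an element of $E$. These $k$ replacements, possibly together with zero, one, or two auxiliary replacements borrowing an element of $[w_0]\cap[w_{2n}]$ as a temporary anchor near the two ends, can be threaded so that the very first inserted label is $\lambda_1$ and the very last deleted label is $\lambda_2$. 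Any replacements still left over — there are $n-k\ge 1$ of them at most, after the anchors are counted — are absorbed into a single detour, i.e. one replacement $z\mapsto z'$ is stretched to $z\to c_1\to c_2\to\cdots\to c_t\to z'$ routed through distinct elements of $C$ (there are exactly $n-k$ of them). The condition $y_i\ne x_{i+1}$ is then routine to verify: inside a detour it holds because the $c_j$ are distinct and differ from $z$ and $z'$, and at a seam between two consecutive pieces it holds because $D$, $E$, $C$ and the anchor are mutually disjoint there. A short case analysis on whether $\lambda_1\in E$ or $\lambda_1\in C$, whether $\lambda_2\in D$ or $\lambda_2\in C$, and on the small values $k\in\{0,1\}$ (including the subcase $[w_0]=[w_{2n}]$), completes the argument.

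The step I expect to be the crux is precisely this accounting in the extreme cases: when the surplus $n-k$ is large relative to the distance $2k$ (so $k\in\{0,1\}$, or $[w_0]=[w_{2n}]$) and/or $\lambda_1,\lambda_2$ lie outside $[w_0]\cup[w_{2n}]$, or $\lambda_1=\lambda_2$. There almost every one of the $n$ replacements is ``wasted'' and must sit inside a detour whose net effect on $S_i$ is trivial, yet the condition $y_i\ne x_{i+1}$ rules out the naive length-$2$ ``out and back'' detour, so each detour needs length at least $3$ and distinct intermediate elements, and one must also make sure no backtrack is created where the detour meets the rest of the walk or the anchors. This is exactly where the hypothesis $n\ge 4$ is used: for instance, when $k=1$ a detour that both re-inserts $\lambda_2$ and avoids backtracking at both of its endpoints requires $|C|=n-1\ge 3$, and a direct check shows the construction genuinely fails for $n=3$. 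Once the correct detour (and anchor pattern) is chosen in each case, confirming that the resulting pair sequence satisfies all of the listed requirements is a mechanical verification.
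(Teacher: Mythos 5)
Your reformulation is correct and is essentially the same bookkeeping the paper uses: by \autoref{Replacing}, a $2n$-walk is determined by the sequence of insert/delete pairs $(x_i,y_i)$ acting on $S_i=[w_{2i}]$, the walk is special exactly when consecutive labels differ, and the only cross-pair constraint is $y_i\neq x_{i+1}$; the paper's proof writes its walks in precisely this form (its grouped label pairs). So the skeleton matches the paper. The problem is that the substance of the theorem lies entirely in the part you defer: exhibiting, in every case, a concrete pair sequence meeting $x_0=\lambda_1$, $y_{n-1}=\lambda_2$ and the non-backtracking condition. Saying that ``once the correct detour (and anchor pattern) is chosen in each case, the verification is mechanical'' leaves exactly the choice of those patterns --- which is the whole proof --- unproved.

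Moreover, the one general mechanism you do describe (the $k$ productive replacements, at most two anchor replacements, and a \emph{single} detour stretched through distinct elements of $C$) does not, as stated, cover the extreme cases you yourself flag as the crux. Take $[w_0]=[w_{2n}]$ and $\lambda_1=\lambda_2=s\in C$ with $n=4$: there are no productive replacements to stretch, the two anchor steps $(s,a)$ at the start and $(a,s)$ at the end leave only $n-2=2$ slack steps in the middle, and the only net-zero two-step filler is the out-and-back $(c,b),(b,c)$, which violates $y_i\neq x_{i+1}$; stretching one of the anchor replacements instead destroys the requirement that the last deleted label be $\lambda_2$. A correct walk does exist (e.g.\ the pairs $(s,a_1),(c,a_2),(a_1,c),(a_2,s)$ with $a_1,a_2\in[w_0]$, $c\in C\setminus\{s\}$, or the paper's cyclic-shift walks in its Case~3), but it is an interleaved pattern, not an instance of your template. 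So the extreme cases ($[w_0]=[w_{2n}]$, $|D|\in\{0,1\}$, $\lambda_1=\lambda_2$, labels in $C$) each need explicit constructions of the kind the paper lists case by case, and until those are written down and checked the proof is incomplete.
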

\begin{proof}%
We may assume that $[w_{0}] = \{x_{1}, x_{2}, \dots, x_{k}\} \cup \{y_{k+1}, y_{k+2}, \dots, y_{n-1}\}$, $[w_{2n}] = \{x_{1}, x_{2}, \dots, x_{k}\} \cup \{z_{k+1}, z_{k+2}, \dots, z_{n-1}\}$ and $[w_{0}] \cup [w_{2n}] \cup \{s_{1}, s_{2}, \dots, s_{k+1}\} = \{1, \dots, 2n-1\}$, where $k \in \{0, 1, \dots, n - 1\}$. By symmetry, we divide the proof into three cases. 

\begin{case}
$\lambda_{1} \in \{z_{k+1}, z_{k+2}, \dots, z_{n-1}\}$ and $\lambda_{2} \in \{y_{k+1}, y_{k+2}, \dots, y_{n-1}\}$.
\end{case}
It is obvious that $k \neq n-1$. By symmetry, we may assume that $\lambda_{1} = z_{k+1}$ and $\lambda_{2} = y_{n-1}$.

\begin{subcase}
$k \leq n-4$.
\end{subcase}
In the odd graph $O_{n}$, we can find a special walk from $w_{0}$ to $w_{2n}$, whose edges are labeled with $\undergroup{z_{k+1}, y_{k+1}}$, $\undergroup{z_{k+2}, y_{k+2}}$, $\dots, \undergroup{z_{n-3}, y_{n-3}}$, $\undergroup{s_{1}, y_{n-2}}$, $\undergroup{s_{2}, s_{1}}, \dots, \undergroup{s_{k+1}, s_{k}}$, $\undergroup{z_{n-2}, s_{k+1}}$, $\undergroup{z_{n-1}, y_{n-1}}$.

\begin{subcase}
$k = n-3$.
\end{subcase}
In the odd graph $O_{n}$, we can find a special walk from $w_{0}$ to $w_{2n}$, whose edges are labeled with $\undergroup{z_{n-2}, x_{1}}$, $\undergroup{s_{1}, y_{n-2}}$, $\undergroup{s_{2}, s_{1}}$, $\undergroup{s_{3}, s_{2}}, \dots, \undergroup{s_{k}, s_{k-1}}$, $\undergroup{x_{1}, s_{k}}$, $\undergroup{z_{n-1}, y_{n-1}}$. (It needs $n$ at least $4$, otherwise $x_{1}$ does not exist.)

\begin{subcase}
$k = n-2$.
\end{subcase}
In the odd graph $O_{n}$, we can find a special walk from $w_{0}$ to $w_{2n}$, whose edges are labeled with $\undergroup{z_{n-1}, y_{n-1}}$, $\undergroup{s_{1}, x_{1}}$, $\undergroup{s_{2}, s_{1}}$, $\undergroup{s_{3}, s_{2}}, \dots, \undergroup{s_{n-3}, s_{n-4}}$, $\undergroup{y_{n-1}, s_{n-3}}$, $\undergroup{x_{1}, y_{n-1}}$. 

\begin{case}
$\lambda_{1} \in \{z_{k+1}, z_{k+2}, \dots, z_{n-1}\}$ and $\lambda_{2} \in \{s_{1}, s_{2}, \dots, s_{k+1}\}$.
\end{case}
It is obvious that $k \neq n-1$. By symmetry, we may assume that $\lambda_{1} = z_{k+1}$ and $\lambda_{2} = s_{k+1}$.

If $k = 0$, then we can find a special walk from $w_{0}$ to $w_{2n}$, whose edges are labeled with $\undergroup{z_{1}, y_{1}}$, $\undergroup{z_{2}, y_{2}}, \dots, \undergroup{z_{n-2}, y_{n-2}}$, $\undergroup{s_{1}, y_{n-1}}$, $\undergroup{z_{n-1}, s_{1}}$.

If $k \in \{1, 2, \dots, n-2\}$, then we can find a special walk from $w_{0}$ to $w_{2n}$, whose edges are labeled with $\undergroup{z_{k+1}, y_{k+1}}$, $\undergroup{z_{k+2}, y_{k+2}}, \dots, \undergroup{z_{n-2}, y_{n-2}}$, $\undergroup{s_{1}, y_{n-1}}$, $\undergroup{s_{2}, s_{1}}$, $\undergroup{s_{3}, s_{2}}, \dots, \undergroup{s_{k+1}, s_{k}}$, $\undergroup{z_{n-1}, s_{k+1}}$.

\begin{case}
$\lambda_{1} \in \{s_{1}, s_{2}, \dots, s_{k+1}\}$ and $\lambda_{2} \in \{s_{1}, s_{2}, \dots, s_{k+1}\}$.
\end{case}
\begin{subcase}
$\lambda_{1} = \lambda_{2} = s_{1}$.
\end{subcase}

If $k=0$, then we can find a special walk whose edges are labeled with $\undergroup{s_{1}, y_{1}}$, $\undergroup{z_{2}, y_{2}}$, $\undergroup{z_{3}, y_{3}}, \dots, \undergroup{z_{n-1}, y_{n-1}}$, $\undergroup{z_{1}, s_{1}}$.

If $k=1$, then we can find a special walk whose edges are labeled with $\undergroup{s_{1}, x_{1}}$, $\undergroup{z_{2}, y_{2}}$, $\undergroup{z_{3}, y_{3}}, \dots, \undergroup{z_{n-1}, y_{n-1}}$, $\undergroup{x_{1}, s_{1}}$.

If $k=2$, then we can find a special walk whose edges are labeled with $\undergroup{s_{1}, x_{1}}$, $\undergroup{s_{3}, y_{3}}$, $\undergroup{z_{3}, s_{3}}$, $\undergroup{z_{4}, y_{4}}, \dots, \undergroup{z_{n-1}, y_{n-1}}$, $\undergroup{x_{1}, s_{1}}$.

If $k \in \{3, \dots, n-2\}$, then we can find a special walk whose edges are labeled with $\undergroup{s_{1}, x_{1}}$, $\undergroup{s_{3}, y_{k+1}}$, $\undergroup{s_{4}, s_{3}}$, $\undergroup{s_{5}, s_{4}}, \dots, \undergroup{s_{k+1}, s_{k}}$, $\undergroup{z_{k+1}, s_{k+1}}$, $\undergroup{z_{k+2}, y_{k+2}}, \dots, \undergroup{z_{n-1}, y_{n-1}}$, $\undergroup{x_{1}, s_{1}}$.

If $k = n-1$, then we can find a special walk whose edges are labeled with $s_{1}, \undergroup{x_{2}, s_{2}}, \undergroup{x_{3}, x_{2}}, \undergroup{x_{4}, x_{3}}, \dots, \undergroup{x_{k}, x_{k-1}}, \undergroup{s_{2}, x_{k}}$, $s_{1}$. 

\begin{subcase}
$\lambda_{1} \neq \lambda_{2}$. We may assume that $\lambda_{1} = s_{1}$ and $\lambda_{2} = s_{k+1}$.
\end{subcase}

If $k=1$, then we can find a special walk whose edges are labeled with $\undergroup{s_{1}, y_{2}}$, $\undergroup{s_{2}, s_{1}}$, $\undergroup{z_{3}, y_{3}}$, $\undergroup{z_{4}, y_{4}}$, $\undergroup{z_{5}, y_{5}}, \dots, \undergroup{z_{n-1}, y_{n-1}}$, $\undergroup{z_{2}, s_{2}}$.

If $k=2$, then we can find a special walk whose edges are labeled with $\undergroup{s_{1}, y_{3}}$, $\undergroup{s_{2}, s_{1}}$, $\undergroup{s_{3}, s_{2}}$, $\undergroup{z_{4}, y_{4}}$, $\undergroup{z_{5}, y_{5}}, \dots, \undergroup{z_{n-1}, y_{n-1}}$, $\undergroup{z_{3}, s_{3}}$.

If $k \in \{3, 4, \dots, n-2\}$, then we can find a special walk whose edges are labeled with $\undergroup{s_{1}, y_{k+1}}$, $\undergroup{s_{2}, s_{1}}$, $\undergroup{s_{3}, s_{2}}$, $\undergroup{s_{4}, s_{3}}$, $\undergroup{s_{5}, s_{4}}, \dots, \undergroup{s_{k+1}, s_{k}}$, $\undergroup{z_{k+2}, y_{k+2}}$, $\undergroup{z_{k+3}, y_{k+3}}, \dots, \undergroup{z_{n-1}, y_{n-1}}$, $\undergroup{z_{k+1}, s_{k+1}}$.

If $k=n-1$, then we can find a special walk whose edges are labeled with $\undergroup{s_{1}, x_{n-1}}$, $\undergroup{s_{3}, s_{1}}$, $\undergroup{s_{4}, s_{3}}$, $\undergroup{s_{5}, s_{4}}$, $\undergroup{s_{6}, s_{5}}, \dots, \undergroup{s_{k}, s_{k-1}}$, $\undergroup{s_{k+1}, s_{k}}$, $\undergroup{x_{n-1}, s_{k+1}}$. This completes the proof of the theorem. 
\resetcounter
\end{proof}

Note that we restrict $n \geq 4$ in the above theorem since the result is not true for $n = 3$, see the next subsection. For $n= 3$, we have the following result. 

\begin{theorem}\label{2N+3}
Given two vertices $w_{0}, w_{9}$ (not necessarily distinct) in $O_{3}$ with $\lambda_{1} \notin [w_{0}]$ and $\lambda_{2} \notin [w_{9}]$, there exists a $9$-special walk from $w_{0}$ to $w_{9}$ such that its first edge is labeled with $\lambda_{1}$ and the last edge is labeled with $\lambda_{2}$.
\end{theorem}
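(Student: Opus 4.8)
The plan is to mimic the proof of \autoref{2N} inside the Petersen graph $O_{3}$, exhibiting in each case an explicit special walk of length $9 = 2n+3$; the three extra steps are genuinely needed, since (as explained below) the length-$2n = 6$ walks of \autoref{2N} need not exist here. First I would set up the notation of \autoref{2N} specialized to $n = 3$: let $k\in\{0,1,2\}$ be the number of elements in which $[w_{0}]$ and $[w_{9}]$ differ, and write $[w_{0}] = \{x_{1},\dots,x_{k}\}\cup\{y_{k+1},\dots,y_{2}\}$, $[w_{9}] = \{x_{1},\dots,x_{k}\}\cup\{z_{k+1},\dots,z_{2}\}$, and $\{s_{1},\dots,s_{k+1}\} = \{1,\dots,5\}\setminus([w_{0}]\cup[w_{9}])$. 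Since $\lambda_{1}\notin[w_{0}]$ and $\lambda_{2}\notin[w_{9}]$, each of $\lambda_{1},\lambda_{2}$ lies among the $z$'s, among the $y$'s, or among the $s$'s; permuting indices inside each of these four families, and using the symmetry that swaps $(w_{0},\lambda_{1})$ with $(w_{9},\lambda_{2})$ while exchanging the $y$'s with the $z$'s, the problem reduces to the same short list of cases as in \autoref{2N} (the three cases determined by the locations of $\lambda_{1}$ and $\lambda_{2}$, each further split according to the value of $k$).

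For each case I would then exhibit a length-$9$ sequence of labels in the $\undergroup{\cdot,\cdot}$ notation and check it is legitimate. By \autoref{Replacing}, a block $\undergroup{a,b}$ replaces $b$ by $a$ in the current set, so the intermediate vertices $[w_{1}],\dots,[w_{8}]$ can be read off mechanically; then one verifies (using \autoref{2-Path} to be sure a ``swap'' really moves to an adjacent, distinct vertex) that no two consecutive edges coincide and that the walk terminates at $w_{9}$, the first and last labels being $\lambda_{1},\lambda_{2}$ by construction. When $\lambda_{1}$ or $\lambda_{2}$ lies among the $z$'s or $y$'s (the analogues of Cases~1 and~2 of \autoref{2N}) there is ample room, since then $w_{0}\neq w_{9}$, and suitable walks are written in the same spirit as there; the only delicate situation is Case~3, where both $\lambda_{1}$ and $\lambda_{2}$ are among the $s_{i}$.

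The crux is the sub-case $k = 2$, so $w_{0} = w_{9} = \{x_{1},x_{2}\}$, with $\lambda_{1} = \lambda_{2} = s_{1}$. Here the first edge is forced to be the unique $s_{1}$-labelled edge at $w_{0}$, namely $\{x_{1},x_{2}\}\{s_{2},s_{3}\}$, and the last edge is forced to be that same edge; hence the middle seven edges form a closed walk at $\{s_{2},s_{3}\}$, and since $O_{3}$ has girth $5$ there is no closed non-backtracking walk of length at most $4$, so a total length of $6$ as in \autoref{2N} is impossible and one must instead route the walk once around a $5$-cycle. Concretely one may take the walk whose vertex sequence $[w_{0}],[w_{1}],\dots,[w_{9}]$ is
\[
\{x_{1},x_{2}\},\{s_{2},s_{3}\},\{x_{1},s_{1}\},\{x_{2},s_{3}\},\{s_{1},s_{2}\},\{x_{1},s_{3}\},\{x_{2},s_{2}\},\{x_{1},s_{1}\},\{s_{2},s_{3}\},\{x_{1},x_{2}\},
\]
whose label sequence is $s_{1},\undergroup{x_{2},s_{2}},\undergroup{x_{1},x_{2}},\undergroup{s_{1},s_{3}},\undergroup{x_{2},s_{1}}$; note that its second and eighth edges coincide, which is permitted since they are not consecutive. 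The companion sub-case $\lambda_{1} = s_{1}$, $\lambda_{2} = s_{k+1}$ with $\lambda_{1}\neq\lambda_{2}$, as well as the $k\in\{0,1\}$ instances of Case~3, are handled by similar explicit walks.

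Thus the only real obstacle is bookkeeping: in the tight sub-cases above a careless choice of labels either repeats a pair of consecutive edges or fails to return to $w_{9}$, so some care is needed in laying them down; otherwise the verification is finite and routine, and since $O_{3}$ is just the Petersen graph the statement can alternatively be confirmed by direct inspection.
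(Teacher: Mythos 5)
Your proposal is correct and follows essentially the same route as the paper: reduce by symmetry to a short list of cases according to $|[w_{0}]\cap[w_{9}]|$ and the locations of $\lambda_{1},\lambda_{2}$, then exhibit an explicit $9$-special walk in each case, and your walk for the hardest sub-case $w_{0}=w_{9}=\{x_{1},x_{2}\}$, $\lambda_{1}=\lambda_{2}=s_{1}$ checks out (consecutive vertex sets are disjoint, there is no immediate backtracking, and the first and last labels are $s_{1}$). The only difference is that the paper writes out the explicit label sequences for all the remaining sub-cases instead of declaring them routine, but those are indeed straightforward finite verifications in $O_{3}$.
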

\begin{proof}
By symmetry, we divide the proof into three cases in terms of the size of intersection of $[w_{0}]$ and $[w_{9}]$, and each case is divided into some subcases. 

\begin{case}
Assume that $[w_{0}] = \{y_{1}, y_{2}\}$, $[w_{9}] = \{z_{1}, z_{2}\}$ and $\{y_{1}, y_{2}, z_{1}, z_{2}, s_{1}\} = \{1, 2, 3, 4, 5\}$. 
\end{case}

When $\lambda_{1} = z_{1}$ and $\lambda_{2} = y_{1}$, we can find a special walk whose edges are labeled with $\undergroup{z_{1}, y_{1}}, \undergroup{z_{2}, y_{2}}, \undergroup{s_{1}, z_{1}}, \undergroup{y_{2}, z_{2}}, y_{1}$. 

When $\lambda_{1} = z_{1}$ and $\lambda_{2} = s_{1}$, we can find a special walk whose edges are labeled with $\undergroup{z_{1}, y_{1}}, \undergroup{z_{2}, y_{2}}, \undergroup{y_{1}, z_{1}}, \undergroup{y_{2}, z_{2}}, s_{1}$.

When $\lambda_{1} = \lambda_{2} = s_{1}$, we can find a special walk whose edges are labeled with $\undergroup{s_{1}, y_{1}}, \undergroup{z_{2}, y_{2}}, \undergroup{y_{1}, s_{1}}, \undergroup{y_{2}, z_{2}}, s_{1}$.

\begin{case}
Assume that $[w_{0}] = \{x_{1}, y_{2}\}$, $[w_{9}] = \{x_{1}, z_{2}\}$ and $\{x_{1}, y_{2}, z_{2}, s_{1}, s_{2}\} = \{1, 2, 3, 4, 5\}$.
\end{case}

When $\lambda_{1} = z_{2}$ and $\lambda_{2} = y_{2}$, we can find a special walk whose edges are labeled with  $\undergroup{z_{2}, x_{1}}, \undergroup{s_{2}, z_{2}}, \undergroup{x_{1}, y_{2}}, \undergroup{s_{1}, x_{1}}, y_{2}$. 

When $\lambda_{1} = z_{2}$ and $\lambda_{2} = s_{2}$, we can find a special walk whose edges are labeled with $\undergroup{z_{2}, y_{2}}, \undergroup{s_{2}, x_{1}}, \undergroup{y_{2}, s_{2}}, \undergroup{s_{1}, z_{2}}, s_{2}$. 

When $\lambda_{1} =s_{1}$ and $\lambda_{2} = s_{2}$, we can find a special walk whose edges are labeled with $\undergroup{s_{1}, x_{1}}, \undergroup{s_{2}, s_{1}}, \undergroup{x_{1}, s_{2}}, \undergroup{s_{1}, x_{1}}, s_{2}$. 

When $\lambda_{1} = \lambda_{2} = s_{1}$, we can find a special walk whose edges are labeled with $\undergroup{s_{1}, x_{1}}, \undergroup{z_{2}, y_{2}}, \undergroup{s_{2}, s_{1}}, \undergroup{y_{2}, z_{2}}, s_{1}$. 

\begin{case}
Assume that $[w_{0}] = [w_{9}] = \{x_{1}, x_{2}\}$ and $\{x_{1}, x_{2}, s_{1}, s_{2}, s_{3}\} = \{1, 2, 3, 4, 5\}$. 
\end{case}

When $\lambda_{1} =s_{1}$ and $\lambda_{2} = s_{3}$, we can find a special walk whose edges are labeled with $\undergroup{s_{1}, x_{1}}, \undergroup{s_{3}, x_{2}}, \undergroup{x_{1}, s_{3}}, \undergroup{s_{2}, x_{1}}, s_{3}$. 

When $\lambda_{1} = \lambda_{2} = s_{3}$, we can find a special walk whose edges are labeled with $\undergroup{s_{3}, x_{1}}, \undergroup{s_{2}, x_{2}}, \undergroup{x_{1}, s_{3}}, \undergroup{s_{1}, x_{1}}, s_{3}$. This completes the proof of the theorem. 
\end{proof}

By induction on the length of the special walk, we can conclude the following results. 

\begin{theorem}\label{2N+}
Given two vertices $w_{0}, w_{\ell+1}$ (not necessarily distinct) in $O_{n}$ ($n \geq 4$) with $\lambda_{1} \notin [w_{0}]$ and $\lambda_{2} \notin [w_{\ell+1}]$, there exists an $(\ell +1)$-special walk from $w_{0}$ to $w_{\ell+1}$ such that its first edge is labeled with $\lambda_{1}$ and the last edge is labeled with $\lambda_{2}$, where $\ell$ is an arbitrary integer at least $2n-1$.
\end{theorem}

\begin{theorem}\label{2N3+}
Given two vertices $w_{0}, w_{\ell+1}$ (not necessarily distinct) in $O_{3}$ with $\lambda_{1} \notin [w_{0}]$ and $\lambda_{2} \notin [w_{\ell+1}]$, there exists an $(\ell + 1)$-special walk from $w_{0}$ to $w_{\ell+1}$ such that its first edge is labeled with $\lambda_{1}$ and the last edge is labeled with $\lambda_{2}$, where $\ell$ is an arbitrary integer at least $8$.
\end{theorem}

In the above theorems, we require the first and the last edges to be prescribed, so we immediately have the following corollaries.  

\begin{corollary}\label{A2N+}
Given two vertices $w_{1}, w_{\ell}$ (not necessarily distinct) in $O_{n}$ ($n \geq 4$) with $\lambda_{1} \notin [w_{1}]$ and $\lambda_{2} \notin [w_{\ell}]$, there exists an $(\ell - 1)$-special walk from $w_{1}$ to $w_{\ell}$ such that its first edge is not labeled with $\lambda_{1}$ and the last edge is not labeled with $\lambda_{2}$, where $\ell$ is an arbitrary integer at least $2n-1$. 
\end{corollary}

\begin{corollary}\label{A2N3+}
Given two vertices $w_{1}, w_{\ell}$ (not necessarily distinct) in $O_{3}$ with $\lambda_{1} \notin [w_{1}]$ and $\lambda_{2} \notin [w_{\ell}]$, there exists an $(\ell - 1)$-special walk from $w_{1}$ to $w_{\ell}$ such that its first edge is not labeled with $\lambda_{1}$ and the last edge is not labeled with $\lambda_{2}$, where $\ell$ is an arbitrary integer at least $8$.
\end{corollary}

\subsection{Sharpness of the length in \autoref{2N} and \autoref{2N+3}}
In this subsection, we show that the length of the special walks in \autoref{2N} and \autoref{2N+3} can not be can not be reduced anymore. 

\begin{theorem}
The length of the special walk in \autoref{2N} is shortest possible. 
\end{theorem}
\begin{proof}
Let $[w_{0}] = \{y_{1}, y_{2}, \dots, y_{n-1}\}$, $[w] = \{z_{1}, z_{2}, \dots, z_{n-1}\}$ and $\lambda_{1} = s_{1}$. By Lemma 1\ref{2-Path}, if we go from $w_{0}$ to $w$ such that the first edge is labeled with $\lambda_{1}$, then the length of a shortest even path is at least $2n$. Hence, the length of the special walk in \autoref{2N} cannot be improved to a smaller even number. 

And if $[w_{2n}] = \{y_{1}, y_{2}, \dots, y_{n-1}\}$, we go from $w_{0}$ to $w_{2n}$ such that both the first edge and the last edge are labeled with $s_{1}$, in other words, we go from $w_{0}$ to $w$ such that the first edge is labeled with $s_{1}$, and then go through the edge $ww_{2n}$, then the length of a  shortest odd special walk from $w_{0}$ to $w_{2n}$ is at least $2n + 1$.  Hence, the length of the special walk in \autoref{2N} cannot be improved to a smaller odd number. 
\end{proof}

\begin{theorem}
The length of the special walk in \autoref{2N+3} is shortest possible. 
\end{theorem}
\begin{proof}
Firstly, we show that the length $6$ is impossible. For example, let $[w_{0}] = [u_{0}] = \{1, 2\}$ and $[w_{1}] = [u_{1}] = \{3, 4\}$, if we want to go from $w_{0}$ to $u_{0}$ such that the first edge is $w_{0}w_{1}$ and the last edge is $u_{1}u_{0}$, then we should go from $w_{1}$ to $u_{1}$ by a special walk of length $4$, a contradiction. 

Secondly, we show that the length $7$ is impossible. For example, let $[w_{0}] = [w_{7}] = \{1, 2\}$, $[w_{1}] = \{3, 4\}$ and $[w_{6}] = \{4, 5\}$. Suppose that there exists a $7$-special walk $w_{0}w_{1}w_{2}w_{3}w_{4}w_{5}w_{6}w_{7}$. Note that $w_{1}w_{2}$ can be labeled with $1$ or $2$, we may assume that it is labeled with $1$, thus $[w_{2}] = \{2, 5\}$. We may assume that $w_{2}w_{3}, w_{3}w_{4}, w_{4}w_{5}, w_{5}w_{6}$ are labeled with $\lambda_{3}, \lambda_{4}, \lambda_{5}, \lambda_{6}$, respectively. If $5 \notin [w_{4}]$, then we should go along $w_{4}w_{3}w_{2}$ such that $[w_{2}] = \{2, 5\}$, and then $\lambda_{4} = 5$; similarly, we should go along $w_{4}w_{5}w_{6}$ such that $[w_{6}] = \{4, 5\}$, and again we have that $\lambda_{5} = 5$, a contradiction. So we have that $[w_{4}] = \{5, \lambda\}$. Notice that $\lambda_{3} \in [w_{4}]$ and $\lambda_{6} \in [w_{4}]$, thus $\lambda = \lambda_{3} = \lambda_{6}$, but this is impossible since $\lambda_{3} \in \{3, 4\}$ and $\lambda_{6} \in \{1, 2\}$. 

Thirdly, we show that the length $8$ is impossible. For example, let $[w_{0}] = [w_{7}] = \{1, 2\}$ and $[w_{1}] = [w_{8}] = \{3, 4\}$.  Suppose that there exists a $8$-special walk $w_{0}w_{1}w_{2}w_{3}w_{4}w_{5}w_{6}w_{7}w_{8}$ and $w_{1}w_{2}, w_{2}w_{3}, w_{3}w_{4}, w_{4}w_{5}, w_{5}w_{6}, w_{6}w_{7}$ are labeled with $\lambda_{2}, \lambda_{3}, \lambda_{4}, \lambda_{5}, \lambda_{6}, \lambda_{7}$ respectively. By Lemma 1\ref{Replacing}, the three elements $\lambda_{2}, \lambda_{4}, \lambda_{6}$ are distinct and $\{1, 2\} \subseteq \{\lambda_{2}, \lambda_{4}, \lambda_{6}\}$. So we may assume that $\{\lambda_{2}, \lambda_{4}, \lambda_{6}\} = \{1, 2, \lambda\}$. Note that both $w_{0}w_{1}$ and $w_{7}w_{8}$ are labeled with $5$, thus $\lambda \neq 5$ and $\lambda \in \{3, 4\}$.  Note that $\lambda_{2} \notin \{3, 4\}$, Lemma 1\ref{Replacing} implies that $\lambda = \lambda_{4} \in \{3, 4\}$. By the construction of the odd graph, we have that $\lambda \notin [w_{3}]$, but $[w_{3}]$ is obtained from $\{3, 4\}$ by replacing $\lambda_{3}$ with $\lambda_{2}$, and then $\lambda_{3} = \lambda = \lambda_{4}$, a contradiction. 
\end{proof}

\section{Applications to strong edge coloring}
An $\ell$-thread in a graph $G$ is a path of length $\ell + 1$ in $G$ whose $\ell$ internal vertices have degree $2$ in the full graph $G$. Note that the endpoints of the $\ell$-thread can be the same vertex. For example, an $(\ell +1)$-cycle contains many $\ell$-threads.

Let $R$ be a subgraph of $G$. We call $R$ a {\em $\kappa$-reducible configuration} if there exists an induced subgraph $R'$ of $R$ such that each strong $\kappa$-edge-coloring of $G - R'$ can be extended to a strong $\kappa$-edge-coloring of $G$. 

Now, we address the following result for reducible configurations. 
\begin{lemma}\label{RC}
Let $G$ be a graph with maximum degree at most $\kappa$.
\begin{enumerate}
\item If $\kappa \geq 4$, then $(2\kappa - 1)^{+}$-caterpillar is $(2\kappa - 1)$-reducible. Moreover, $(2\kappa - 2)$-caterpillar is not $(2\kappa - 1)$-reducible. 
\item If $\kappa = 3$, then $8^{+}$-caterpillar is $5$-reducible. Moreover, $7$-caterpillar is not $5$-reducible. 
\end{enumerate}
\end{lemma}
\begin{proof}
Suppose that $G$ has a caterpillar tree with central path $u_{0}u_{1} u_{2}\dots u_{\ell}u_{\ell+1}$, where $\ell \geq 2\kappa - 1$ if $\kappa \geq 4$, and $\ell \geq 8$ if $\kappa =3$. That is, every vertex $u_{i}$ with $1 \leq i \leq \ell$ is adjacent to at least $\deg(u_{i}) - 2$ pendent vertices. Let $R$ be the graph obtained from $G - \{u_{2}, u_{3}, \dots, u_{\ell -1}\}$ by deleting all the pendent vertices adjacent to $u_{1}$ and $u_{\ell}$, except $u_{0}$ and $u_{\ell + 1}$. For any arbitrary strong edge coloring $\varphi$ of $R$ using colors from $\{1, 2, \dots, 2\kappa-1\}$, we may assume that the color set for the edges incident with $u_{0}$  and $u_{\ell + 1}$ is a subset of a $\kappa$-set $A$ and a $\kappa$-set $B$, respectively. By \autoref{2N+} and \ref{2N3+}, we can find a special walk $w_{0}w_{1} \dots w_{\ell}w_{\ell + 1}$ in the odd graph $O_{\kappa}$ such that it satisfies the following properties: 
\begin{enumerate}[label = (\alph*)]
\item its length is exactly $\ell + 1$;
\item $[w_{0}] = \overline{A}$ and $[w_{\ell + 1}] = \overline{B}$, where $\overline{A}$ and $\overline{B}$ are the complements of $A$ and $B$ respectively. 
\item $w_{0}w_{1}$ is labeled with $\phi(u_{0}u_{1})$ and $w_{\ell}w_{\ell + 1}$ is labeled with $\phi(u_{\ell}u_{\ell+1})$. 
\end{enumerate}

Hence, we can color the edges incident to $u_{i}$ with colors in $\overline{[w_{i}]}$, and the edge $u_{i}u_{i+1}$ is colored with the label on $w_{i}w_{i+1}$ for $1 \leq i \leq \ell$. Finally, we obtain a strong edge coloring of $G$ using colors from $\{1, 2, \dots, 2\kappa -1\}$, a contradiction. 
\end{proof}

Before we prove the main theorems, we need the following result due to Ne{\v{s}}et{\v{r}}il, Raspaud and Sopena \cite{MR1439297}. 

\begin{theorem}[Ne{\v{s}}et{\v{r}}il, Raspaud and Sopena \cite{MR1439297}]\label{thread}
If $G$ is a planar graph with girth at least $5\ell+1$, then $G$ contains an $\ell$-thread or a vertex with degree at most one. 
\end{theorem}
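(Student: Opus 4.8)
The plan is to argue by contradiction with a discharging argument on a plane embedding. Suppose $G$ is a planar graph with girth at least $5\ell+1$ containing neither a $1$-vertex nor an $\ell$-thread. Working in one component we may assume $G$ is connected, and passing to a leaf block of its block--cut-tree we may even assume $G$ is $2$-connected: such a block $B$ still has girth at least $5\ell+1$, still has minimum degree at least $2$ (its unique cut vertex $c$ of $G$ has degree at least $2$ in $B$, for otherwise the neighbour of $c$ in $B$ would be a $1$-vertex of $G$), and cannot contain an $\ell$-thread unless $G$ does (the vertex $c$ has degree greater than $2$ in $G$, hence cannot be an internal vertex of any thread of $B$, so the $\ell$ internal vertices of a thread of $B$ are $2$-vertices of $G$ as well). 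Finally, a $2$-connected graph all of whose vertices have degree $2$ is a cycle $C_{n}$ with $n \geq 5\ell+1 \geq \ell+2$, and any $\ell+2$ consecutive vertices of $C_{n}$ give an $\ell$-thread; so $G$ has a vertex of degree at least $3$, and in particular every face boundary, being a cycle, contains a vertex of degree at least $3$.

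First I would record the structural fact that $G$ contains no path on $\ell$ consecutive $2$-vertices: if $v_{1}v_{2}\cdots v_{\ell}$ were such a path, and $v_{0},v_{\ell+1}$ are the remaining neighbours of $v_{1}$ and $v_{\ell}$, then $v_{0}v_{1}\cdots v_{\ell}v_{\ell+1}$ has $\ell$ internal $2$-vertices; the girth bound forbids all coincidences among $v_{0},\dots,v_{\ell+1}$ except possibly $v_{0}=v_{\ell+1}$, which is allowed, so we get an $\ell$-thread, a contradiction. Hence along the boundary cycle of any face the $2$-vertices occur in runs of length at most $\ell-1$, each run followed by a vertex of degree at least $3$; so a face of length $d$ carries $b$ vertices of degree at least $3$ on its boundary with $b \geq d/\ell$, and since $d \geq 5\ell+1$ this forces $b \geq 6$.

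Now give each vertex $v$ charge $\deg(v)-4$ and each face $f$ charge $\deg(f)-4$; by Euler's formula the total is $-8$. Discharge by the single rule: each face sends $1$ to every incident $2$-vertex and $\tfrac13$ to every incident $3$-vertex. A $2$-vertex, incident to two faces, ends with $-2+2=0$; a $3$-vertex, incident to three faces, ends with $-1+3\cdot\tfrac13=0$; a vertex of degree at least $4$ keeps its nonnegative charge. For a face $f$ of length $d$, splitting its boundary cycle into $b$ blocks of the form ``(run of at most $\ell-1$ $2$-vertices)$+$(one vertex of degree at least $3$)'', the charge it gives away is at most
\[
(d-b)\cdot 1 + b\cdot\tfrac13 \;=\; d-\tfrac{2}{3}b \;\leq\; d-4,
\]
so $f$ ends with charge at least $(d-4)-(d-4)=0$. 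Every vertex and every face now has nonnegative charge, contradicting that the total charge is $-8$.

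The step I expect to be delicate is the calibration of the discharging rule: the computation is tight, and it is exactly the ``$+1$'' in the hypothesis ``girth at least $5\ell+1$'' that pushes the number $b$ of high-degree boundary vertices of a face from $\geq 5$ up to $\geq 6$, which is precisely what makes $d-\tfrac23 b\leq d-4$ hold and every face break even. The only other point needing attention (routine rather than hard) is the reduction to a $2$-connected graph, which is what makes face boundaries honest cycles so that the ``runs of $2$-vertices'' bookkeeping is valid, together with disposing of the degenerate cases (isolated vertices, and $G$ being a single cycle) up front.
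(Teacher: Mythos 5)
The paper itself offers no proof of this statement---it is quoted from Ne{\v{s}}et{\v{r}}il, Raspaud and Sopena---so your argument has to stand on its own. Its discharging core is sound and is essentially the standard way to prove such results: the charges $\deg(\cdot)-4$ summing to $-8$, the rule sending $1$ to each incident $2$-vertex and $\tfrac13$ to each incident $3$-vertex, the observation that no $\ell$ consecutive $2$-vertices can occur, and the consequent bound $b\geq d/\ell>5$, hence $b\geq 6$, all check out in a $2$-connected plane graph of girth at least $5\ell+1$. The genuine gap is the reduction to a leaf block $B$. An $\ell$-thread of $B$ has its internal vertices of degree $2$ \emph{in $B$}, and the unique cut vertex $c$ of $G$ lying in $B$ may well satisfy $\deg_B(c)=2$ while $\deg_G(c)\geq 3$; your parenthetical ``$c$ has degree greater than $2$ in $G$, hence cannot be an internal vertex of any thread of $B$'' conflates the two degree functions. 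Consequently ``$B$ has no $\ell$-thread'' does not follow from ``$G$ has no $\ell$-thread'': a run of consecutive $B$-$2$-vertices through $c$ can have length up to $2\ell-1$ even though every run of $G$-$2$-vertices has length at most $\ell-1$. Whichever notion of ``$2$-vertex'' you then use when discharging in $B$, the bookkeeping breaks precisely at $c$: with $B$-degrees, the (at most two) faces of $B$ incident with $c$ only satisfy $b\geq 5$ and may finish at $-\tfrac23$; with $G$-degrees, the vertex $c$ itself starts at $\deg_B(c)-4=-2$ and is not fully compensated.

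The damage is local and easy to repair, because your target total is $-8$: at most two faces (or the single vertex $c$) can end negative, and only by a bounded amount, so the grand total still exceeds $-8$ and the contradiction survives; alternatively, drop the block decomposition altogether, keep only connectivity and $\delta(G)\geq 2$, and let each face pay per \emph{incidence} along its boundary walk (a $2$-vertex is met twice in total among all facial walks, a $3$-vertex three times); a maximal run of $2$-vertices along a facial walk is still a path in $G$, so the run-length bound persists, and every facial walk contains a cycle when $\delta\geq2$, so its length is still at least $5\ell+1$. A further small point: that every face boundary carries a vertex of degree at least $3$ should be deduced from the run-length bound (an all-$2$-vertex boundary of length $\geq 5\ell+1>\ell$ is impossible) or from connectivity, not merely from the existence of one vertex of degree at least $3$ somewhere in the graph. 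As written, however, the sentence ``we may even assume $G$ is $2$-connected'' is not justified and needs one of these patches.
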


\begin{theorem}\label{MR}
If $G$ is a planar graph with maximum degree at most $\Delta$ and girth at least $10\Delta - 4$, where $\Delta \geq 4$, then $G$ has a strong edge coloring with $2\Delta - 1$ colors. 
\end{theorem}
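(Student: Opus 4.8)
The plan is a minimality argument. Suppose the theorem fails, and let $G$ be a counterexample with $|E(G)|$ as small as possible. Since the strong chromatic index of a graph is the maximum of those of its components, $G$ is connected and nonempty. As $10\Delta-4=5(2\Delta-1)+1$, \autoref{thread} with $\ell=2\Delta-1$ gives: $G$ has a $1$-vertex or a $(2\Delta-1)$-thread. I will rule out each possibility.

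Suppose $G$ contains a $(2\Delta-1)$-thread $x_{0}x_{1}\cdots x_{2\Delta}$. Regarded as a caterpillar whose spine is $x_{1},\dots,x_{2\Delta-1}$ and which has no pendant vertices---allowed, since each spine vertex has degree $2$ and the defining condition asks only for at least $\deg-2=0$ pendant vertices---this is a $(2\Delta-1)^{+}$-caterpillar. As $\Delta\ge4$, part~(1) of \autoref{RC} yields a nonempty subgraph $R'$ (containing at least $x_{2},\dots,x_{2\Delta-2}$) such that every strong $(2\Delta-1)$-edge-coloring of $G-R'$ extends to one of $G$. The graph $G-R'$ is planar, has maximum degree at most $\Delta$, has girth at least that of $G$, and has fewer edges than $G$, so by minimality it has a strong $(2\Delta-1)$-edge-coloring; as in the proof of \autoref{RC}, this is extended to $G$ by reading edge colors off the labels of a suitable special walk in $O_{\Delta}$, which \autoref{2N+} supplies. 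This contradicts the choice of $G$.

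It remains to exclude a $1$-vertex, which is the delicate part. A preliminary observation: no $1$-vertex of $G$ has a neighbor of degree at most $2$, because if $v$ were such a vertex with neighbor $u$, then in a strong $(2\Delta-1)$-edge-coloring of $G-v$ (available by minimality) the edge $uv$ would be in conflict with at most $(\deg u-1)\Delta\le\Delta<2\Delta-1$ colored edges, leaving a free color and contradicting minimality. So every $1$-vertex of $G$ hangs directly on a vertex of degree at least $3$, and such a lone pendant edge is genuinely \emph{not} reducible by deletion alone, since the $\Theta(\Delta^{2})$ edges clustered around that branch vertex may, in an arbitrary coloring of $G-v$, use up all $2\Delta-1$ colors. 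I therefore plan to pass to the $2$-core $G_{0}$ of $G$ (obtained by repeatedly deleting vertices of degree at most $1$). If $G_{0}$ is empty then $G$ is a forest and $\chiup_{s}'(G)\le2\Delta-1$ follows at once from a greedy coloring rooted at any vertex---each edge meets at most $2(\Delta-1)$ previously colored edges---contradicting minimality. Otherwise $G_{0}$ is a nonempty planar subgraph of $G$ with minimum degree at least $2$, maximum degree at most $\Delta$, and girth at least $10\Delta-4$, so \autoref{thread} yields a $(2\Delta-1)$-thread $T$ in $G_{0}$. The crux---and where I expect the main work---is to show that $T$, together with the forest of limbs that $G$ attaches along the spine of $T$, is a $(2\Delta-1)$-reducible configuration of $G$: one recolors the spine by an $O_{\Delta}$-walk exactly as in \autoref{RC}, which gives each spine vertex a palette of $\Delta$ colors, enough to color all of its edges in $G$, and then colors the limbs by a greedy sweep; the verification amounts to bounding how the limbs can attach along the spine and checking that the greedy sweep never needs more than $2\Delta-1$ colors. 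Once this configuration is reducible, minimality is contradicted once more, completing the proof.
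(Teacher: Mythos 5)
Your handling of the thread-in-$G$ case is fine and matches the paper's use of \autoref{RC}. The genuine gap sits exactly where you flag it: after passing to the $2$-core $G_{0}$, the configuration you must reduce is a $(2\Delta-1)$-thread of $G_{0}$ together with arbitrary trees (``limbs'') of unbounded depth hanging from its spine, and this is \emph{not} covered by \autoref{RC}, which treats only caterpillars, i.e.\ spines with pendant vertices attached. You declare this extension to be ``the crux'' and offer only a sketch (odd-graph walk for the spine, greedy sweep for the limbs, ``bounding how the limbs can attach''), with no verification; so as written the argument is incomplete at its central step. The sketch is in fact completable---the first-level limb edges at a spine vertex $u_{i}$ fit into the $\Delta$-colour palette $\overline{[w_{i}]}$ exactly like pendant edges, and any deeper limb edge, processed outward from the spine, conflicts with at most $(\Delta-1)+(\Delta-1)=2\Delta-2$ previously coloured edges---but you would have to state and prove this stronger reducibility lemma yourself; it is not a citation of \autoref{RC}.

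The paper avoids needing any such lemma by a simpler device that your preliminary observation narrowly misses. It deletes the $1$-vertices of $G$ only once, setting $H=G-A$, and proves that a minimum counterexample satisfies $\delta(H)\ge 2$: if $x$ had $H$-degree $1$ with unique $H$-neighbour $y$, then every other $G$-neighbour of $x$ is a leaf, so deleting one such leaf $z$, colouring $G-z$ by minimality, and noting that $xz$ conflicts with at most $(\deg_{G}(x)-1)+(\deg_{G}(y)-1)\le 2\Delta-2<2\Delta-1$ coloured edges, the colouring extends---contradiction. Your claim that a lone pendant edge at a branch vertex is ``genuinely not reducible by deletion alone'' is true only when that branch vertex has a second non-leaf neighbour of large degree; in the case that actually matters (a support vertex whose non-leaf neighbour is unique, which is precisely what deep limbs create at their deepest level) deleting a single leaf does work. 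With $\delta(H)\ge2$ in hand, the $(2\Delta-1)$-thread of $H$ supplied by \autoref{thread} corresponds in $G$ to a caterpillar whose spine carries only pendant leaves, so \autoref{RC} applies verbatim and limbs of depth two or more never arise. I recommend either adopting this route or writing out in full the thread-with-limbs reducibility lemma your plan relies on.
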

\begin{proof}
Let $G$ be a counterexample to the theorem with minimum number of vertices. It is obvious that $G$ is connected and the minimum degree is at least one. Let $A$ be the set of pendent vertices in $G$, and let $H = G - A$. Note that $H$ is connected and $\delta(H) \geq 1$, for otherwise $G$ is a star and $\chiup_{s}'(G) = \Delta(G) < 2\Delta - 1$. 

Suppose that $uv$ is an edge in $H$ with $\deg_{H}(u) = 1$. Thus, the vertex $u$ is adjacent to a pendent vertex $w$ in $G$. By the minimality of $G$, the graph $G - w$ has a strong edge coloring with $2\Delta - 1$ colors. Note that $uw$ has at most $ (\deg_{G}(u) - 1) + \deg_{G}(v) \leq 2\Delta - 1$ edges within distance two (including $uw$ itself), so we can extend the strong edge coloring of $G - w$ to the whole graph $G$. Thus, the minimum degree of $H$ is at least two.

By \autoref{thread}, the graph $H$ has a $(2\Delta - 1)$-thread. Correspondingly, the original graph $G$ has a $(2\Delta - 1)$-caterpillar tree, this is a contradiction due to \autoref{RC}. 
\end{proof}

Using the same technique, we can prove the following result which has been proved by Borodin and Ivanova \cite{MR3117054}. 
\begin{theorem}[Borodin and Ivanova \cite{MR3117054}]
If $G$ is a subcubic planar graph with girth at least $41$, then $G$ has a strong edge coloring with $5$ colors. 
\end{theorem}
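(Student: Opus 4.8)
The plan is to rerun the minimal-counterexample argument behind \autoref{MR}, now specialized to $\kappa = 3$ and using the subcubic half of \autoref{RC}. Suppose the statement fails, and let $G$ be a subcubic planar graph with girth at least $41$ that has no strong $5$-edge-coloring, chosen with the fewest vertices. Exactly as in \autoref{MR}, $G$ is connected with $\delta(G) \ge 1$; moreover $G$ is not a star, since for a star $\chiup_{s}'(G) = \Delta(G) \le 3 < 5$. So we may assume $G$ has at least two vertices of degree at least $2$.

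Next I would strip off the pendant vertices. Let $A$ be the set of degree-$1$ vertices of $G$ and set $H = G - A$; then $H$ is connected with $\delta(H) \ge 1$ (a component of $H$ consisting of a single vertex would force $G$ to be a star). The key preliminary claim is $\delta(H) \ge 2$, proved just as the corresponding claim in \autoref{MR}: if $\deg_{H}(x) = 1$ with $xy \in E(H)$, then $x \notin A$ forces $\deg_{G}(x) \ge 2$, so $x$ has a degree-$1$ neighbour $z$ in $G$; by minimality $G - z$ has a strong $5$-edge-coloring, and the edge $xz$ has at most $(\deg_{G}(x) - 2) + (\deg_{G}(y) - 1) \le 1 + 2 = 3 < 5$ edges within distance two, so the coloring extends to $G$, a contradiction.

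Now I would invoke \autoref{thread} for $H$ with $\ell = 8$: since $H$ is planar with girth at least $41 = 5\cdot 8 + 1$, it contains a $1$-vertex or an $8$-thread, and since $\delta(H) \ge 2$ it must contain an $8$-thread, i.e.\ a path $u_{0}u_{1}\cdots u_{8}u_{9}$ whose internal vertices $u_{1},\dots,u_{8}$ have degree $2$ in $H$. Because $H = G - A$ is obtained by deleting only degree-$1$ vertices of $G$, each $u_{i}$ with $1 \le i \le 8$ has $\deg_{G}(u_{i}) \le 3$ and is adjacent in $G$ to exactly $\deg_{G}(u_{i}) - 2$ vertices of $A$. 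Hence $G$ contains an $8$-caterpillar tree with central path $u_{0}u_{1}\cdots u_{8}u_{9}$, and since $\kappa = 3$, \autoref{RC} tells us the $8$-caterpillar is $5$-reducible, contradicting the minimality of $G$. This contradiction finishes the proof.

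I expect no genuine obstacle in this argument: all of the substantive work is already packaged into the special-walk constructions underlying \autoref{2N3+} and into \autoref{RC}. The only thing requiring care is the numerology — checking that the hypothesis ``girth at least $41$'' is exactly $5\ell + 1$ for the value $\ell = 8$ that makes an $8$-caterpillar $5$-reducible in the subcubic case, and verifying that passing from an $8$-thread in $H$ to an $8$-caterpillar in $G$ correctly accounts for the (at most one) pendant vertex hanging off each interior thread vertex.
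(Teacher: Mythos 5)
Your proposal is correct and is exactly the argument the paper intends: the paper gives no separate proof, stating only that the result follows ``using the same techniques'' as \autoref{MR}, and you have carried out precisely that specialization ($\kappa=3$, $\ell=8$, girth $41=5\cdot 8+1$, the subcubic case of \autoref{RC} via \autoref{2N3+}). The only blemish, inherited from the paper, is the count of edges within distance two of $xz$ in the $\delta(H)\geq 2$ claim, which should be $(\deg_{G}(x)-1)+(\deg_{G}(y)-1)\leq 4$ rather than $3$; this still falls below $5$, so the extension step is unaffected.
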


Next, we give some results with bounded maximum average degree. 

\begin{theorem}[Cranston and West \cite{MR3603558}]\label{Thread}
If $G$ has average degree less than $2 + \frac{2}{3\ell - 1}$ and $\Delta(G) \geq 3$, then $G$ contains an $\ell$-thread or a vertex with degree at most one. 
\end{theorem}

Chang, Montassier, P{\^e}cher and Raspaud \cite{MR3268687} gave the following result for closed special walks.

\begin{theorem}[Chang \etal \cite{MR3268687}]\label{CMPR}
Every vertex  in $O_{\Delta}$ is contained in a closed $2\ell$-special walk for arbitrary integers $\Delta, \ell \geq 3$. 
\end{theorem}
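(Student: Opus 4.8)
The plan is to split on the length $2\ell$ of the desired closed walk, handling ``long'' and ``short'' lengths separately. For the long range $2\ell \geq 2\Delta$ (equivalently $\ell \geq \Delta$) the statement drops out of the prescribed-endpoint results already established. Since $|\overline{[w_{0}]}| = \Delta \geq 2$, fix two \emph{distinct} elements $\lambda_{1}, \lambda_{2} \notin [w_{0}]$ and apply \autoref{2N+} (when $\Delta \geq 4$), respectively \autoref{2N3+} (when $\Delta = 3$, where the lengths it produces are all integers $\geq 9$, so in particular every even length $\geq 10$), with both endpoints taken equal to $w_{0}$; this yields a special walk of length exactly $2\ell$ from $w_{0}$ to $w_{0}$ whose first edge is labeled $\lambda_{1}$ and whose last edge is labeled $\lambda_{2}$. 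Distinct edges at a vertex of $O_{\Delta}$ carry distinct labels, so $\lambda_{1} \neq \lambda_{2}$ forces the first and last edges to be different; hence the walk is a genuine \emph{closed} $2\ell$-special walk through $w_{0}$.

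For the short range I would give an explicit construction, valid whenever $\ell \geq 3$ and $\ell - 1 \leq \Delta$. Write $[w_{0}] = X \cup \{p\}$ with $p \in [w_{0}]$, and choose pairwise distinct $q_{1}, q_{2}, \dots, q_{\ell-1} \in \overline{[w_{0}]}$ (possible since $\ell - 1 \leq \Delta = |\overline{[w_{0}]}|$). Consider the walk $w_{0}w_{1}\cdots w_{2\ell}$ whose consecutive pairs of edge-labels are
\[
\undergroup{q_{1}, p},\ \undergroup{q_{2}, q_{1}},\ \undergroup{q_{3}, q_{2}},\ \dots,\ \undergroup{q_{\ell-1}, q_{\ell-2}},\ \undergroup{p, q_{\ell-1}}.
\]
By \autoref{Replacing}, the vertex sets along this walk are $[w_{0}]$, then $([w_{0}]\setminus\{p\})\cup\{q_{1}\}$, $([w_{0}]\setminus\{p\})\cup\{q_{2}\}$, $\dots$, $([w_{0}]\setminus\{p\})\cup\{q_{\ell-1}\}$, and finally $[w_{0}]$ again, so the walk closes up at $w_{0}$. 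It then has to be checked that every step is actually an edge of $O_{\Delta}$ — i.e.\ that the label of each edge is absent from both of its endpoint sets — which follows because $p \in [w_{0}]$, each $q_{i} \notin [w_{0}]$, and the $q_{i}$ are distinct; and that consecutive edges are distinct, i.e.\ consecutive labels differ, \emph{including} the wrap-around: the last label is $q_{\ell-1}$ and the first is $q_{1}$, and these differ precisely because $\ell - 1 \neq 1$. That final inequality is exactly where the hypothesis $\ell \geq 3$ is used, and it is essentially the only delicate point of the whole argument; everything else is routine verification.

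To finish, I would observe that the two ranges overlap and together cover every $\ell \geq 3$: for $\Delta \geq 4$ the explicit construction disposes of $3 \leq \ell \leq \Delta+1$ and \autoref{2N+} of all $\ell \geq \Delta$; for $\Delta = 3$ the explicit construction disposes of $\ell \in \{3,4\}$ and \autoref{2N3+} of all $\ell \geq 5$. I expect the one spot that needs genuine care is precisely this last bookkeeping for $\Delta = 3$: because \autoref{2N3+} only delivers walks of length at least $9$, the lengths $2\ell = 6$ and $2\ell = 8$ are unavailable from it and must be produced by the explicit construction — which is legitimate since $\ell - 1 \leq 3 = |\overline{[w_{0}]}|$ when $\ell \leq 4$. (Should one prefer, the two small Petersen-graph cases can instead be settled by exhibiting a $6$-cycle, respectively an $8$-cycle, through the prescribed vertex, using the vertex-transitivity of $O_{3}$.)
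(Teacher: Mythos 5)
Your argument is correct, but it is worth noting that the paper itself offers no proof of this statement: it is quoted as a known result of Chang, Montassier, P\^echer and Raspaud \cite{MR3268687}, so your derivation is necessarily a different (and self-contained) route, built entirely from this paper's own machinery. The two halves both check out. For $\ell\geq\Delta$ you invoke \autoref{2N+} (resp.\ \autoref{2N3+} for $\Delta=3$, covering even lengths $\geq 10$) with coincident endpoints, and choosing $\lambda_{1}\neq\lambda_{2}$ outside $[w_{0}]$ does force the first and last edges to be distinct, since edges at a common vertex of $O_{\Delta}$ carry distinct labels; thus you even get the cyclic version of ``special'' (the wrap-around pair of consecutive edges is distinct), which is the form actually needed for $C_{\kappa,\Delta}$. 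For $3\leq\ell\leq\Delta+1$ your explicit label sequence $\undergroup{q_{1},p},\undergroup{q_{2},q_{1}},\dots,\undergroup{q_{\ell-1},q_{\ell-2}},\undergroup{p,q_{\ell-1}}$ is well defined (a label not in the current vertex set determines a unique next edge), closes up at $w_{0}$ by \autoref{Replacing}, and has cyclically distinct consecutive labels exactly because $p\in[w_{0}]$, the $q_{i}\notin[w_{0}]$ are pairwise distinct, and $q_{\ell-1}\neq q_{1}$ when $\ell\geq 3$; the bookkeeping for $\Delta=3$ (lengths $6$ and $8$ from the construction, $\geq 10$ from \autoref{2N3+}) is also right, and indeed your $\ell=3,4$ walks are a $6$-cycle and an $8$-cycle of the Petersen graph. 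Compared with simply citing \cite{MR3268687}, your approach buys a proof internal to this paper and the stronger cyclic conclusion; its only cost is the (routine) verification of the short-range construction.
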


Let $C_{\kappa, \Delta}$ be the graph obtained from $C_{\kappa}$ by adding $(\Delta - 2)$ pendant edges to each vertex in $C_{\kappa}$. We can easily address the following result. 
\begin{theorem}\label{CW}
\mbox{}
\begin{enumerate}[label = (\alph*)]
\item\label{3_6_a} If $\kappa$ is even and at least $6$, or $\kappa$ is odd and at least $2\Delta - 1$ with $\Delta \geq 4$, then $\chiup_{s}'(C_{\kappa, \Delta}) = 2\Delta - 1$. 
\item\label{3_6_b} If $\kappa \geq 5$ and $\kappa \neq 7$, then $\chiup_{s}'(C_{\kappa, 3}) = 2\Delta - 1 = 5$.
\end{enumerate}
\end{theorem}
\begin{proof}
Suppose that the unique cycle in $C_{\kappa, \Delta}$ is $v_{0}v_{1}\dots v_{\kappa}$. Note that the even-girth of $O_{\Delta}$ is six and the odd-girth is $2\Delta - 1$. If $\kappa$ is even, then there is a closed special $\kappa$-walk in $O_{\Delta}$ by \autoref{CMPR}. If $\kappa = 2\Delta - 1$, then each smallest odd cycle in $O_{\Delta}$ is a closed special $\kappa$-walk. If $\kappa$ is odd and at least $2\Delta + 3$, then there is a closed special $\kappa$-walk in $O_{\Delta}$ by \autoref{2N+} and \ref{2N3+}. If $\kappa = 2\Delta + 1 \geq 9$, then there is a closed special $\kappa$-walk in $O_{\Delta}$ by \autoref{2N+}. Hence, the odd graph $O_{\Delta}$ has a closed special $\kappa$-walk $w_{0}w_{1}\dots w_{\kappa}$ as desired. Similar to the previous, we can color the edges incident to $v_{i}$ with colors in $[w_{i}]$, and the edge $v_{i}v_{i+1}$ is colored with the label on $w_{i}w_{i+1}$ for $0 \leq i \leq \kappa$. 
\end{proof}

\begin{theorem}
Let $G$ be a graph with maximum degree at most $\Delta$ ($\Delta \geq 4$). If $\evengirth(G) \geq 6$, $\oddgirth(G) \geq 2\Delta - 1$ and $\mad(G) < 2 + \frac{1}{3\Delta - 2}$, then $\chiup_{s}'(G) \leq 2\Delta - 1$. 
\end{theorem}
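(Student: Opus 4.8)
The plan is to run the minimum-counterexample argument of \autoref{MR}, but with \autoref{Thread} of Cranston and West in place of \autoref{thread}. The thing to notice is the numerical identity $2+\frac{1}{3\Delta-2}=2+\frac{2}{3(2\Delta-1)-1}$, so that the hypothesis $\mad(G)<2+\frac{1}{3\Delta-2}$ is precisely what \autoref{Thread} requires with $\ell=2\Delta-1$.

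First I would take a counterexample $G$ with the fewest edges; it is connected and $\delta(G)\ge 1$. If $G$ is a star then $\chiup_{s}'(G)=\Delta(G)<2\Delta-1$, so $G$ is not a star, and writing $A$ for the set of $1$-vertices of $G$ the graph $H:=G-A$ is connected with $|V(H)|\ge 2$ and $\delta(H)\ge 1$. Then I would reprove the claim from \autoref{MR} in this setting, namely $\delta(H)\ge 2$: if $\deg_{H}(x)=1$ with $H$-neighbor $y$, then $x$ has a pendant neighbor $z$ in $G$; by minimality $G-z$ has a strong $(2\Delta-1)$-edge-coloring, the edge $xz$ has at most $(\deg_{G}(x)-1)+(\deg_{G}(y)-1)\le 2\Delta-2$ edges within distance two, and so a free color remains for $xz$ and the coloring extends to $G$, a contradiction. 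Next, if $H$ is $2$-regular it is a cycle $C_{n}$ and $G\subseteq C_{n,\Delta}$; then $\evengirth(G)\ge 6$ and $\oddgirth(G)\ge 2\Delta-1$ force $n\ge 6$ when $n$ is even and $n\ge 2\Delta-1$ when $n$ is odd, so \autoref{CW} gives $\chiup_{s}'(G)\le\chiup_{s}'(C_{n,\Delta})=2\Delta-1$, a contradiction. Hence $H$ has no $2$-regular component.

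Since $\mad(H)\le\mad(G)<2+\frac{2}{3(2\Delta-1)-1}$, \autoref{Thread} applied to $H$ with $\ell=2\Delta-1$ yields a $1^{-}$-vertex or a $(2\Delta-1)$-thread in $H$, and $\delta(H)\ge 2$ forces the latter, say $u_{0}u_{1}\cdots u_{2\Delta}$. Each internal vertex $u_{i}$ is a non-leaf of $G$ whose only $H$-neighbors are $u_{i-1}$ and $u_{i+1}$, so its remaining $\deg_{G}(u_{i})-2$ neighbors all lie in $A$ and are pendant in $G$; thus $u_{0}u_{1}\cdots u_{2\Delta}$ is the central path of a $(2\Delta-1)^{+}$-caterpillar of $G$, which by part~(1) of \autoref{RC} is $(2\Delta-1)$-reducible. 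Then a strong $(2\Delta-1)$-edge-coloring of the smaller graph $G-R'$, which exists by minimality, extends to $G$---the final contradiction. The step needing the most care is the cycle case, where the even/odd length of $C_{n}$ must be matched against the precise hypotheses of \autoref{CW} (and this is the only place the girth conditions are used); everything else is routine bookkeeping together with direct appeals to the results above.
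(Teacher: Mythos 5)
Your proposal is correct and follows essentially the same route as the paper's own proof: a minimal counterexample, deletion of the $1$-vertices to get $H$ with $\delta(H)\ge 2$, the Cranston--West result (\autoref{Thread}) with $\ell=2\Delta-1$ to produce either a $(2\Delta-1)$-thread (handled by \autoref{RC}) or a $2$-regular $H$ forcing $G\subseteq C_{\kappa,\Delta}$ (handled by \autoref{CW} via the even/odd girth hypotheses). You merely make explicit some details the paper leaves implicit, such as the identity $2+\frac{1}{3\Delta-2}=2+\frac{2}{3(2\Delta-1)-1}$ and the matching of the cycle length against the hypotheses of \autoref{CW}.
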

\begin{proof}
Let $G$ be a counterexample to the theorem with minimum number of vertices. It is obvious that $G$ is connected and the minimum degree is at least one. Let $A$ be the set of pendent vertices in $G$, and let $H = G - A$. It is easy to show that the minimum degree of $H$ is at least two. 

If $H$ has a $(2\Delta - 1)$-thread, then $G$ has a $(2\Delta - 1)$-caterpillar tree, which contradicts \autoref{RC}. So we may assume that there exists no $(2\Delta - 1)$-threads in $H$. Note that $H$ is connected. By \autoref{Thread} and the hypothesis, $H$ is a cycle and $G$ is a subgraph of $C_{\kappa, \Delta}$, this is a contradiction due to \autoref{CW}\ref{3_6_a}. 
\end{proof}

Similarly, we have the following result for subcubic graphs. 
\begin{theorem}
If $G$ is a subcubic graph with girth at least $8$ and $\mad(G) < 2 + \frac{2}{23}$, then $\chiup_{s}'(G) \leq 5$. 
\end{theorem}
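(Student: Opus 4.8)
The plan is to run the argument of the preceding theorem with $\Delta=3$, but with thread length $\ell=8$ in place of $2\Delta-1=5$. The reason for the $8$ is that in $O_{3}$ the special walks of \autoref{2N+3} and \autoref{2N3+} only exist from length $9$ on, so by \autoref{RC}(2) it is an $8^{+}$-caterpillar (not a $5^{+}$-caterpillar) that is $5$-reducible; correspondingly the matching threshold in Cranston and West's \autoref{Thread} is $2+\frac{2}{3\cdot 8-1}=2+\frac{2}{23}$, which is exactly the hypothesis on $\mad(G)$.

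First I would take a counterexample $G$ with the fewest vertices, so that $G$ is connected with $\delta(G)\ge 1$. Let $A$ be the set of $1$-vertices of $G$ and set $H=G-A$. Since deleting leaves from a connected graph leaves it connected (internal vertices of any path have degree at least $2$), $H$ is connected, and it is nonempty because otherwise $G$ is a star with $\chiup_{s}'(G)=\Delta(G)\le 3<5$. Then I would verify $\delta(H)\ge 2$ exactly as in the claim inside \autoref{MR}: if $x$ has a unique $H$-neighbour $y$, then $x$ carries a pendant $1$-vertex $z$ in $G$, the graph $G-z$ has a strong $5$-edge-coloring by minimality, and $xz$ has at most $\deg_{G}(x)+\deg_{G}(y)-2\le 4<5$ edges within distance two (all neighbours of $x$ other than $y$ being $1$-vertices), so a free colour remains and the colouring extends to $G$ — a contradiction.

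With $\delta(H)\ge 2$ in hand, if $H$ contains an $8$-thread then $G$ contains an $8$-caterpillar, which is $5$-reducible by \autoref{RC}(2), contradicting minimality. So $H$ has no $8$-thread; the average degree of $H$ is at most $\mad(G)<2+\frac{2}{23}=2+\frac{2}{3\cdot 8-1}$, and $\delta(H)\ge 2$ rules out a $1^{-}$-vertex, so \autoref{Thread} forces $H$ to have a $2$-regular component; since $H$ is connected, $H=C_{\kappa}$. As $G$ is subcubic, each vertex of this cycle carries at most one pendant leaf, so $G\subseteq C_{\kappa,3}$, and the girth hypothesis gives $\kappa\ge 8$. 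Now \autoref{CW} yields $\chiup_{s}'(C_{\kappa,3})=5$, so $\chiup_{s}'(G)\le 5$, contradicting the choice of $G$.

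The step I expect to be the main obstacle is this last one when $\kappa$ is odd: \autoref{CW} only asserts $\chiup_{s}'(C_{\kappa,\Delta})=2\Delta-1$ for even $\kappa$ (its odd case is restricted to $\Delta\ge 4$), so for odd $\kappa\ge 9$ I would instead produce a strong $5$-edge-colouring of $C_{\kappa,3}$ directly. The natural way is to take a closed $\kappa$-special walk $w_{0}w_{1}\dots w_{\kappa-1}w_{0}$ in $O_{3}$ from \autoref{2N3+} (applied with $w_{0}=w_{\kappa}$ and $\ell=\kappa-1\ge 8$) whose first and last edges carry prescribed distinct labels $\lambda_{1}\ne\lambda_{2}$, both chosen from the $3$-element set $\overline{[w_{0}]}$; colouring the cycle edge $v_{i}v_{i+1}$ by the label of $w_{i}w_{i+1}$ and the pendant at $v_{i}$ by the leftover colour of $\overline{[w_{i}]}$, one then has to check — the careful-but-routine part — that the wrap-around pair of cycle edges receives distinct colours and that no two edges at distance two share a colour, so that the resulting colouring is genuinely strong and only $5$ colours are used.
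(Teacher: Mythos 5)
Your proposal is correct and takes essentially the same route as the paper: the paper's one-sentence proof invokes exactly the ingredients you use (the $8^{+}$-caterpillar reducibility of \autoref{RC} coming from the $O_{3}$ walk theorems, \autoref{Thread}, and \autoref{CW}) within the same minimal-counterexample framework of the preceding theorem, now with $\Delta=3$ and thread length $8$. Your explicit closed special walk construction for odd $\kappa\geq 9$ fills in precisely the detail the paper leaves implicit by citing \autoref{2N+3}, since \autoref{CW} as stated covers odd $\kappa$ only for $\Delta\geq 4$.
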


\begin{remark}
The essential of all the proofs is finding a special walk with fixed length between any two vertices in the odd graph, in which the first edge and the last edge are specified. Obviously, all the results cannot be directly extended to the list version of strong edge coloring. 
\end{remark}

\begin{remark}
As far as we know, some results in this note have been used and/or improved in \cite{2015arXiv150803052J, DeOrsey2015}.
\end{remark}

\vskip 3mm \vspace{0.3cm} \noindent{\bf Acknowledgments.} This project was supported by the National Natural Science Foundation of China (11101125) and partially supported by the Fundamental Research Funds for Universities in Henan (YQPY20140051). 

The authors would like to thank the anonymous reviewers for their valuable comments on earlier version.

\end{document}